\numberwithin{equation}{section}
\theoremstyle{definition}
\newtheorem{definition}{Definition}[section]
\newtheorem{example}[definition]{Example}
\theoremstyle{remark}
\newtheorem{remark}[definition]{Remark}
\theoremstyle{plain}
\newtheorem{theorem}[definition]{Theorem}
\newtheorem{lemma}[definition]{Lemma}
\newtheorem{proposition}[definition]{Proposition}
\newcommand{\eps}{\varepsilon}
\newcommand{\zahl}{\mathbb{Z}}
\newcommand{\bdy}{\partial}
\newcommand\ball[1]{\mathbb{B}^{#1}}
\newcommand{\OM}{\Omega}
\newcommand{\Dee}{\mathbb{D}}
\newcommand{\polyD}{\triangle}
\newcommand{\apprh}{\mathscr{A}_{\alpha,N}}
\newcommand{\loc}{\OM_F\cap\triangle}
\newcommand{\smoo}{\mathcal{C}}
\newcommand{\hol}{\mathcal{O}}
\newcommand{\elII}{\mathbb{L}^2}
\newcommand{\bcdot}{\boldsymbol{\cdot}}
\newcommand{\er}{\mathfrak{Re}}
\newcommand{\mi}{\mathfrak{Im}}
\newcommand{\mapp}{\longrightarrow}
\newcommand{\lrarw}{\longrightarrow}
\newcommand{\geef}{G_f}
\newcommand{\lamf}{\Lambda_f}
\newcommand{\poet}{{\kappa_\eta}}
\newcommand\lint[2]{\int\limits_{{#1}}^{{#2}}}
\newcommand{\CC}{\mathbb{C}^2}
\newcommand{\Cn}{\mathbb{C}^n}
\newcommand{\cplx}{\mathbb{C}} 
\newcommand{\RR}{\mathbb{R}}
\begin{document}

\title[Growth of the Bergman kernel]{On the growth of the Bergman kernel \\
near an infinite-type point}
\author{Gautam Bharali}
\address{Department of Mathematics, Indian Institute of Science, Bangalore -- 560 012}
\email{bharali@math.iisc.ernet.in}
\thanks{This work is supported in part by a grant from the UGC under DSA-SAP, Phase~IV.}
\keywords{Bergman kernel, diagonal estimates, infinite type, optimal estimates}
\subjclass{Primary: 32A25, 32A36}

\begin{abstract} 
We study diagonal estimates for the Bergman kernels of certain model domains in $\CC$ near
boundary points that are of infinite type. To do so, we need a mild structural
condition on the defining functions of interest that facilitates {\em optimal}
upper and lower bounds. This is a mild condition; unlike earlier studies of this sort, we
are able to make estimates for {\em non-convex} pseudoconvex domains as well. This condition
quantifies, in some sense, how flat a domain is at an infinite-type boundary point. In this
scheme of quantification, the model domains considered below range --- roughly speaking --- from
being ``mildly infinite-type'' to very flat at the infinite-type points.
\end{abstract}
\maketitle

\section{Statement of Results}\label{S:intro}

Let $\OM\subset\CC$ be a pseudoconvex domain (not necessarily bounded) having a smooth boundary.
Let $p\in\bdy\OM$ be a point of infinite type: by this we mean that for {\em each} $N\in\zahl_+$,
there exists a germ of a $1$-dimensional complex-analytic variety through $p$ whose order of 
contact with $\bdy\OM$ at $p$ is at least $N$. If $\bdy\OM$ is not Levi-flat around $p$, 
there exist holomorphic coordinates $(z,w;V_p)$ centered at $p$ such that
\begin{equation}\label{E:local}
\OM\bigcap V_p \ = \ \{(z,w)\in V_p:\mi w > F(z)+R(z,\er w)\},
\end{equation}
where $F$ is a smooth, subharmonic, non-harmonic function defined in a neighbourhood of $z=0$,
that vanishes to infinite order at $z=0$; $R(\bcdot \ ,0)$ vanishes to 
infinite order at $z=0$; and 
$R$ is $O(|z||\er w|,|\er w|^2)$. Given the infinite order of vanishing of $F$ at $z=0$, how
does one find estimates for the Bergman kernel of $\OM$ near $p$~? In many cases --- for instance:
when $\bdy\OM\cap V_p$ is pseudoconvex of strict type, in the sense of 
\cite{kohn:bb/dbar/wpmd2:72}, away from $p\in\bdy\OM$ --- the function $F$ in \eqref{E:local}
can be extended to a global subharmonic function. In such situations, the model domain
\begin{equation}\label{E:mod}   
\OM_F \ := \ \{(z,w)\in\CC:\mi w > F(z)\},
\end{equation}
approximates $\bdy\OM$ to infinite order along the complex-tangential directions at $p$.
One is thus motivated to investigate estimates for the Bergman kernel for domains of the
form \eqref{E:mod}. In this paper, we shall find estimates for the Bergman kernel of $\OM_F$
on the diagonal as one approaches $(0,0)\in\bdy\OM_F$. More specifically:
\begin{itemize}
\item[\textbullet] We shall derive estimates 
that hold {\em not just} in a non-tangential interior cone
with vertex at $(0,0)$, but for a family of much larger approach regions that comprises regions with
{\em arbitrarily high} orders of contact at $(0,0)$; and
\item[\textbullet] We shall find optimal estimates for the 
growth of the kernel (evaluated on the diagonal
of $\OM_F\times\OM_F$) as $(z,w)\lrarw(0,0)$ through any of the aforementioned approach regions.
\end{itemize}
\smallskip
Pointwise estimates, and a lot more, have been obtained for finite-type domains in $\CC$;
see for instance \cite{diedHerOhs:Bkuepd86} by Diederich {\em et al};
\cite{nagelRosaySteinWainger:eBkSkcwpd88} and \cite{nagelRosaySteinWainger:eBSkC289}
by Nagel {\em et al}; and \cite{mcneal:bbBkfC289} by McNeal. In \cite{kimLee:abBkaicitpd02},
Kim and Lee provide some estimates on the diagonal for the Bergman kernel, as one approaches
an infinite-type boundary point, for a class of convex, infinite-type domains in $\CC$. However,
to the best of our knowledge, not even pointwise estimates are known for any reasonably general
class of pseudoconvex (not necessarily convex) domains of infinite type. Determining such
estimates even for model domains of the form \eqref{E:mod} is not so easy. For instance,
techniques analogous to the scaling methods used in the papers \cite{mcneal:bbBkfC289} and
\cite{nagelRosaySteinWainger:eBSkC289}, in \cite{boasStraubeYu:blBkm95} by Boas {\em et al},
and in \cite{krantzYu:BicBm96} by Krantz-Yu do not seem to yield optimal estimates. Another
problem is that we do not know {\em a priori} whether $\OM_F$ --- recall that our models
{\em do not} arise as limits of scalings of bounded domains --- even has a non-trivial
Bergman space. Things become tractable if we impose a simplifying condition on $F$:
\[
(*) \ \begin{cases}
        \ \text{$F$ is a radial function, i.e. $F(z)=F(|z|) \ \forall z\in\cplx$, and}\\
        \ \text{$\exists\eta>0$ such that $F(z)\geq C|z|^\eta$ when $|z|\geq R$ for some $R>0$ and
                $C>0$.}
        \end{cases}
\]
Under this condition, $\OM_F$ has a non-trivial Bergman space; see for instance 
\cite{haslinger:BHsmd} by Haslinger.
However, given the condition $(*)$, we can say more: under this condition, $\OM_F$ has
a bounded realization and thus admits a localization principle for the Bergman kernel.
To state this precisely, we recall that the Bergman projection for $\OM$ is the orthogonal
projection $B_\OM:\elII(\OM)\mapp\hol(\OM)\cap\elII(\OM)$, and the Bergman kernel is the kernel
representing this projection. Let us denote the Bergman kernel of 
$\OM_F$ as $B_F(Z,Z^\prime), \ (Z,Z^\prime)\in\OM_F\times\OM_F$. We will denote the kernel 
restricted to the diagonal by $K_F(z,w):=B_F((z,w),(z,w))$. We can now state
\smallskip

\begin{proposition}\label{P:locali} 
Let $F$ be a $\smoo^\infty$-smooth subharmonic function that vanishes to infinite order at 
$0\in\cplx$ and satisfies the condition $(*)$. Assume that the boundary of the domain
$\OM_F:=\{(z,w)\in\CC:\mi w > F(z)\}$ is not Levi-flat around the origin. Then:
\begin{enumerate}
\item[1)] There exists an injective holomorphic map $\Psi$ defined in a neighbourhood
of $\overline{\OM}_F$ such that $\Psi(\OM_F)$ is a bounded pseudoconvex domain.
\item[2)] For each polydisc $\polyD$ centered at the origin, there exists a 
constant $\delta\equiv\delta(\polyD)>0$ such that
\begin{equation}\label{E:locali}
\delta K_{\OM_F\cap\polyD}(z,w) \ \leq \ K_F(z,w) \;\; \forall 
(z,w)\in\OM_F\cap(\tfrac{1}{2}\polyD).
\end{equation}
\end{enumerate}
\end{proposition}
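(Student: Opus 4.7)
For Part~1, my plan is to write down an explicit biholomorphism. Begin by observing that $F\geq 0$ everywhere on $\cplx$: since $F$ is radial and subharmonic with $F(0)=0$, the classical monotonicity of circular means of subharmonic functions forces $r\mapsto F|_{|z|=r}$ to be non-decreasing, hence $F(z)\geq F(0)=0$. Consequently $\overline{\OM}_F\subset\cplx\times\{\mi w\geq 0\}$. The growth condition $(*)$ further gives the pointwise bound $|z|\leq R+(\mi w/C)^{1/\eta}$ on $\OM_F$. These two observations suggest the map
\[
\Psi(z,w) \ := \ \bigl(z/(w+i)^{1/\eta},\ (w-i)/(w+i)\bigr),
\]
with $(w+i)^{1/\eta}:=\exp((1/\eta)\log(w+i))$ defined via the principal branch of $\log$. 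Since $\mi(w+i)\geq 1-\epsi>0$ on the enlargement $\Lmb:=\{\mi w>F(z)-\epsi\}$ of $\OM_F$ for any $\epsi<1$, the branch is single-valued and holomorphic on this neighbourhood of $\overline{\OM}_F$. Injectivity of $\Psi$ on $\Lmb$ follows by reading off $w$ from the Cayley-type second coordinate and then $z$ from the first; boundedness of $\Psi(\OM_F)$ follows by combining the displayed bound on $|z|$ with $|w+i|^{1/\eta}\geq(\mi w+1)^{1/\eta}$ and the fact that the Cayley factor maps $\{\mi w>0\}$ into the unit disc; pseudoconvexity of $\Psi(\OM_F)$ is automatic under biholomorphism.

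For Part~2, my plan is to reduce \eqref{E:locali} to the classical localisation principle for Bergman kernels on bounded pseudoconvex domains by transferring through the bounded realisation of Part~1 (using a version of $\Psi$ with sufficiently large parameters in the construction to accommodate the given $\triangle$). Denote the bounded realisation by $\widetilde\OM$ and put $\widetilde U:=\Psi(\OM_F\cap\triangle)$, $\widetilde V:=\Psi(\OM_F\cap\tfrac{1}{2}\triangle)$. The biholomorphic transformation law
\[
K_F(z,w)=|\det\Psi'(z,w)|^2\,K_{\widetilde\OM}(\Psi(z,w)), \qquad K_{\OM_F\cap\triangle}(z,w)=|\det\Psi'(z,w)|^2\,K_{\widetilde U}(\Psi(z,w))
\]
causes the Jacobian factors to cancel, so \eqref{E:locali} is equivalent to an inequality $\delta\,K_{\widetilde U}(\zeta)\leq K_{\widetilde\OM}(\zeta)$ for $\zeta\in\widetilde V$. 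This is the classical localisation principle, whose relative-compactness hypothesis is verified using the containment $\overline{\tfrac{1}{2}\triangle}\Subset\triangle$ in $\cplx^2$ together with $\Psi$ being a homeomorphism. The classical proof runs as follows: given $f\in\hol(\widetilde U)\cap\elII$ and target $\zeta_0\in\widetilde V$, pick $\chi\in\smoo^\infty_c$ equal to $1$ near $\overline{\widetilde V}$ with compact support inside a small enlargement of $\widetilde U$; solve $\bar\partial u=(\bar\partial\chi)f$ on the bounded pseudoconvex $\widetilde\OM$ by H\"ormander's $\elII$ estimate against a plurisubharmonic weight $\varphi$ with a logarithmic pole of sufficient order at $\zeta_0$ (together with a smooth strictly plurisubharmonic correction such as $|\zeta|^2$). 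Boundedness of $\widetilde\OM$ ensures that $e^{\pm\varphi}$ is comparable to a constant on $\mathrm{supp}\,\bar\partial\chi$ and that $\|u\|_{\elII}\lesssim\|u\|_\varphi$; the logarithmic pole forces $u(\zeta_0)=0$; and $g:=\chi f-u\in\hol(\widetilde\OM)\cap\elII$ satisfies $g(\zeta_0)=f(\zeta_0)$ with $\|g\|_{\elII}\lesssim\|f\|_{\elII}$. The extremal characterisation of the Bergman kernel then closes the argument.

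The principal obstacle I anticipate is the explicit construction in Part~1. One must find a \emph{single} holomorphic injection that simultaneously compactifies the half-plane-like $w$-direction and controls the (fractional) polynomial growth of $|z|$ along $\OM_F$, while remaining holomorphic and injective on an open neighbourhood of $\overline{\OM}_F$ rather than on $\OM_F$ itself. The possibly irrational exponent $1/\eta$ forces a choice of branch of $(w+i)^{1/\eta}$, and the key preliminary observation $F\geq 0$ is precisely what makes the principal branch of $\log(w+i)$ single-valued on the required neighbourhood by confining $\overline{\OM}_F$ to the closed upper half-plane in the $w$-variable. Part~2, by comparison, is a routine reduction to a classical result once the bounded realisation is in hand.
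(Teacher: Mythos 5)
Your proof is correct and follows essentially the same route as the paper: prove $F\geq 0$ via the maximum principle, write down an explicit injective holomorphic map that compactifies the $w$-direction by a Cayley transform and tames the growth of $|z|$ by dividing by a power of $w+i$ coming from condition $(*)$, and then deduce Part~2 by pushing the classical localization principle for bounded pseudoconvex domains through the transformation law, where the Jacobian factors cancel. Two cosmetic differences: the paper replaces your fractional exponent $1/\eta$ by the integer $\kappa_\eta:=\lceil 1/\eta\rceil$ (or $1$ when $\eta>1$) precisely to avoid the branch-of-logarithm bookkeeping that you correctly carry out via the observation $\mi(w+i)>1-\epsi>0$ on the enlargement $\{\mi w>F(z)-\epsi\}$; and for Part~2 the paper simply invokes Ohsawa's localization lemma as a black box, whereas you re-derive it by H\"ormander's $\bar\partial$-method. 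Both variants are sound; yours is slightly more self-contained at the cost of a branch argument the paper sidesteps.
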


Yet, does the condition $(*)$ confer any degree of control on the decay of $F$ near
$z=0$ that is sufficient for optimal estimates; estimates on $K_F(z,w)$ {\em from below} in
particular~? Even if $F$ is radial, $K_F(z,w)\gtrsim\|(z,w)\|^{-2}$ is the best that one
expects (for non-tangential approach) without any {\em additional} information on $F$. To
illustrate: the condition that $(0,0)\in\bdy\OM_F$ is of finite type facilitates optimal
estimates because, with this extra information:
\begin{itemize}
\item[\textbullet] one can find constants $C,\delta>0$, and a $M\in\zahl_+$ such that
\begin{align}
(**) \;\; \ball{2}(0;\delta)\cap\{(z,w):\mi{w}>C|z|^{2M}\}
\subset \OM_F&\cap \ball{2}(0;\delta) \notag \\
\subset \ball{2}(0;\delta)&\cap\{(z,w):\mi{w}>(1/C)|z|^{2M}\}; \notag
\end{align}
\item[\textbullet] one can now make precise estimates by exploiting the simplicity of the prototypal
defining function $z\longmapsto |z|^{2M}$.
\end{itemize}
Some condition that enables one to work --- in the spirit of $(**)$ --- with easier-to-handle 
prototypes of $F$ is called for if one wants optimal estimates in the
infinite-type case. It turns out that we do have useful information if the infinite-type
$F$ satisfies the condition \eqref{E:flatness} spelt out in Theorem \ref{T:main} below.
While this condition might look rather arbitrary, it is in fact a mild restriction. 
It is, in some sense, a signature of $F$ being of infinite type: a domain $\OM_F$ satisfying
\eqref{E:flatness} is necessarily of infinite type at $(0,0)$. The condition
\eqref{E:flatness} encompasses a large class of domains, ranging from the 
``mildly infinite type'' to the very flat at $(0,0)$ (refer to the observations
following Theorem~\ref{T:main}).
\smallskip

We need one further piece of notation. Let 
$f:[0,\infty)\lrarw\RR$ be a strictly increasing function, and let $f(0)=0$. We
define the function $\Lambda_f$ as
\[
\Lambda_f(x) \ := \ \begin{cases}
			-1/\log(f(x)), &\text{if $0<x<f^{-1}(1)$}, \\
			0, &\text{if $x=0$}.
			\end{cases}
\]
We can now state our main theorem. 

\begin{theorem}\label{T:main}
Let $F$ be a $\smoo^\infty$-smooth subharmonic function that vanishes
to infinite order at $0\in\cplx$ and satisfies the condition $(*)$. Suppose the
boundary of the domain $\OM_F:=\{(z,w)\in\CC:\mi w > F(z)\}$ is not Levi-flat around the
origin.
\begin{enumerate}
\item[1)] Define $f$ by the relation $f(|z|)=F(z)$. Then, $f$ is a strictly
increasing function on $[0,\infty)$.
\item[2)] Assume that $F$ satisfies the following condition:
\begin{align}
\text{$\exists$ constants} \ &B,\eps_0>0, 
\text{and a function $\chi\in\smoo([0,\eps_0];\RR)$ s.t.} \notag \\
\; & \chi^p \ \text{is convex on $(0,\eps_0)$ for some $p>0$, and}\qquad\quad \label{E:flatness} \\
(1/B)\chi(x) \ \leq \ &\lamf(x) \ \leq \ B\chi(x) \;\; \forall x\in[0,\eps_0]. \notag
\end{align}
Then, for each $\alpha>0$ and $N\in\zahl_+$, there exists a constant $H_{N,\alpha}>0$,
which depends only on $\alpha$ and $N$; and $C_0, C_1>0$, which 
are independent of all parameters, such that:
\begin{align}\label{E:approach}
C_0(\mi w)^{-2}\left[f^{-1}(\mi w)\right]^{-2} \leq K_F&(z,w)  
\leq \ C_1(\mi w)^{-2}\left[f^{-1}(\mi w)\right]^{-2} \\
&\forall (z,w)\in\apprh, \ 0<\mi w<H_{N,\alpha}, \notag
\end{align}
where $\apprh$ denotes the approach region
\[
\apprh \ := \ \left\{(z,w)\in\OM_F:\sqrt{|z|^2+|\er w|^2}<\alpha(\mi w)^{1/N}\right\}.
\]
\item[3)] Under the assumptions of (2), there exists a constant $H_0>0$ that is
independent of all parameters such that the left-hand inequality in \eqref{E:approach}
in fact holds for all $(z,w)\in \OM_F\cap\{(z,w):\mi w<H_0\}$.
\end{enumerate} 
\end{theorem}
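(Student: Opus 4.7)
The plan is to establish the three parts in turn, reserving the bulk of the effort for the lower bound on $K_F$, which subsumes part~(3). For part~(1), since $F$ is radial and subharmonic with $F(0)=0$, the function $r\mapsto rF^{\prime}(r)$ is nondecreasing (as $\Delta F\geq 0$), so $f$ is nondecreasing. If $f$ failed to be strictly increasing on some $[r_1,r_2]\subset(0,\infty)$, the monotonicity of $rF^{\prime}(r)$ would force $F^{\prime}\equiv 0$ on $(0,r_2]$ and hence $F\equiv 0$ on $[0,r_2]$; this would make $\bdy\OM_F$ Levi-flat throughout the slab $\{|z|<r_2\}$, contradicting the standing hypothesis.

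For the upper bound in part~(2), I would rely on the sub-mean-value property of holomorphic $\elII$-functions. Given $(z,w)\in\apprh$ with $\mi w$ small, I would set $r_1:=f^{-1}(\mi w/2)-\alpha(\mi w)^{1/N}$ and $r_2:=\mi w/4$, and verify by monotonicity of $f$ that the polydisc $\{|z^{\prime}-z|<r_1\}\times\{|w^{\prime}-w|<r_2\}$ lies in $\OM_F$. The infinite-order vanishing of $F$ at $0$ forces $f^{-1}(y)/y^{1/N}\to\infty$ as $y\to 0^+$, so for $\mi w$ below a threshold $H_{N,\alpha}$ depending only on $\alpha$ and $N$, this polydisc has volume comparable to $(\mi w)^2[f^{-1}(\mi w)]^2$; sub-mean-value then gives the upper bound $K_F(z,w)\leq C_1(\mi w)^{-2}[f^{-1}(\mi w)]^{-2}$.

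For the lower bound --- which is where part~(3) really lives --- the key reduction is that the biholomorphisms $(z,w)\mapsto(e^{i\theta}z,w)$ and $(z,w)\mapsto(z,w+t)$, $t\in\RR$, of $\OM_F$ force $K_F$ to depend only on $(|z|,\mi w)$. I would Fourier-decompose $A^2(\OM_F)$ in the $\er w$ variable and expand each weighted slice $A^2(\cplx,e^{-2\xi f(|z|)})$ along the orthonormal monomial basis $\{z^k/\|z^k\|_\xi\}$. Since every summand $|z|^{2k}/\|z^k\|_\xi^2$ is nonnegative, $K_\xi(z,z)\geq K_\xi(0,0)$, whence $K_F(z,w)\geq K_F(0,i\mi w)$ throughout $\OM_F$; part~(3) thus reduces to the inequality $K_F(0,iy)\geq C_0 y^{-2}[f^{-1}(y)]^{-2}$ for $0<y<H_0$. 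A change of variable $s=f(r)$ and an integration by parts in the Laplace integral defining $\|1\|_\xi^2$ put the representation of $K_F(0,iy)$ in the form
\begin{equation*}
K_F((0,iy),(0,iy))\ =\ c\int_0^\infty\frac{e^{-2\xi y}}{\displaystyle\int_0^\infty[f^{-1}(s)]^2 e^{-2\xi s}\,ds}\,d\xi,
\end{equation*}
for an absolute constant $c>0$.

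The hard part is extracting the lower bound from this Laplace-type integral with matching constant. Two Watson-type steps are required: first, the inner integral must satisfy $\int_0^\infty[f^{-1}(s)]^2 e^{-2\xi s}\,ds\asymp\xi^{-1}[f^{-1}(1/\xi)]^2$ for large $\xi$; and then, the resulting outer $\xi$-integral must be shown to be $\asymp y^{-2}[f^{-1}(y)]^{-2}$. Both steps presuppose doubling-type regularity of $f^{-1}$ --- e.g.\ $f^{-1}(y/2)\asymp f^{-1}(y)$ together with controlled finite differences --- which is emphatically not automatic in the infinite-type regime. Condition~\eqref{E:flatness}, rewritten via $f(x)=\exp(-1/\lamf(x))$ and the comparability $\lamf\asymp\chi$, is exactly what converts the convexity of $\chi^p$ into the regularity needed for these Laplace asymptotics, with the exponent $p$ entering through the range of infinite-type behaviour that can be handled. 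I anticipate that transcribing $\chi^p$-convexity into explicit, uniform bounds on $f^{-1}$ and its finite differences --- and then executing the two Watson-type asymptotics with constants traced through --- will be the main technical obstacle.
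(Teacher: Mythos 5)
Your proof is correct in outline but takes a genuinely different route from the paper's, most notably in the lower bound. For the upper bound, you and the paper both use inscribed polydiscs, but one small thing: your statement that the polydisc of polyradius $\bigl(f^{-1}(\mi w/2)-\alpha(\mi w)^{1/N},\,\mi w/4\bigr)$ has ``volume comparable to $(\mi w)^2[f^{-1}(\mi w)]^2$'' already uses a doubling-type estimate on $f^{-1}$ (to compare $f^{-1}(\mi w/2)$ with $f^{-1}(\mi w)$), so condition~\eqref{E:flatness} is needed at this step too, not only in the lower-bound half --- the paper makes exactly this point via Lemma~\ref{L:bigKey} and the inequality $f^{-1}(3t/4)/f^{-1}(t)\geq(M+1)^{-1}$.

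The real divergence is in the lower bound. The paper's route is: apply Ohsawa's localization principle (Proposition~\ref{P:locali}, which requires the bounded realization $\Psi(\OM_F)$ built in Section~2), then estimate $K_{\OM_F\cap\polyD}$ from below by choosing the explicit competitor $\phi_t(z,w)=-4t^2/(w+it)^2$ and bounding $\|\phi_t\|^2_{L^2(\OM_F\cap\polyD)}$ from above via a single one-sided integral estimate (the three-piece splitting of $\int_0^A r(t+f(r))^{-2}\,dr$, with Lemma~\ref{L:bigKey} handling the middle piece). Your route avoids localization altogether: it uses the group of symmetries $(z,w)\mapsto(e^{i\theta}z,w+t)$, the Fourier--Plancherel decomposition of $A^2(\OM_F)$ in $\er w$ into weighted Bergman spaces $A^2(\cplx,e^{-2\xi F})$, and the elementary observation that for a radial weight the monomial expansion gives $K_\xi(z,z)\geq K_\xi(0,0)$, whence $K_F(z,w)\geq K_F(0,i\mi w)$ on all of $\OM_F$; then it reduces part~(3) to a Laplace-transform representation for $K_F(0,iy)$. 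This is a legitimate and in some ways more informative strategy --- it gives sharp two-sided behaviour of $K_F$ on the axis and dispenses with the bounded-realization machinery and the need for a non-trivial test function --- but it costs two Watson-type two-sided asymptotics (one for $\int_0^\infty[f^{-1}(s)]^2e^{-2\xi s}\,ds$, one for the outer $\xi$-integral), each of which needs the doubling/regularity supplied by \eqref{E:flatness} with constants traced carefully, plus a check that the Paley--Wiener decomposition of $A^2(\OM_F)$ is valid (this uses the growth $F(z)\gtrsim|z|^\eta$ from $(*)$). The paper's route trades that two-sided Laplace analysis for one upper bound on a single integral, which is why it goes through Ohsawa's lemma. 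You have correctly identified where the difficulty lives and what the role of \eqref{E:flatness} is; the remaining work is the execution of the Laplace asymptotics, which is substantial but not an obstruction in principle.
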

\smallskip

The reader might like to see examples of domains that satisfy all the hypotheses of
Theorem~\ref{T:main}. We discuss two examples, beginning with a very familiar 
example.
\smallskip

\begin{example}\label{Ex:exp} {\em Estimates for the pseudoconvex domain
\[
\OM^\beta \ := \ \{(z,w)\in\CC:\mi w > F_\beta(z)\}
\]
where:
\begin{itemize}
\item[\textbullet] $F_\beta$ is subharmonic;
\item[\textbullet] $F_\beta(z)=\exp(-1/|z|^\beta), \ \beta>0$, in a neighbourhood of $z=0$; and
\item[\textbullet] $F_\beta(z)$ grows like $|z|^2$ for $|z|\gg 1$.
\end{itemize}}

\noindent{We just have to check whether $F$ satisfies the condition \eqref{E:flatness}.
There exists an $\eps_0>0$ such that $\lamf(x)=x^\beta \ \forall x\in[0,\eps_0]$.
We pick
\[
p \ = \ \begin{cases}
	\text{any number $q$ such that $q\beta>1$}, &\text{if $0<\beta\leq 1$,} \\
	1, &\text{if $\beta>1$.}
	\end{cases}
\]
With such a choice for $p$, $(\lamf)^p$ itself is convex on $(0,\eps_0)$.
Hence, Theorem~\ref{T:main} 
tells us that for each $\alpha>0$ and $N\in\zahl_+$, there exists a constant 
$H_{N,\alpha}>0$; and $C_0,C_1>0$, which are independent of all parameters,
such that:
\begin{align}
C_0t^{-2}\left(\log(1/t)\right)^{2/\beta} \leq K_{\OM^\beta}(z,s+it)
\leq &C_1 t^{-2}\left(\log(1/t)\right)^{2/\beta} \notag \\
&\forall (z,s+it)\in\apprh \ \text{and} \ 0< t <H_{N,\alpha}. \qed\notag
\end{align}}
\end{example}
\smallskip

\begin{remark}\label{rem:Lam_f=inf}
We would like to emphasize here that {\em $\lamf$ is allowed to vanish to infinite order at the origin,}
provided it satisfies condition~\eqref{E:flatness}. So, for example, Theorem~\ref{T:main} will provide
optimal growth estimates for $K_F$ for a domain $\OM_F$ of the form \eqref{E:mod} where
\begin{itemize}
\item[\textbullet] $F(z)=\exp\left\{-e^{1/|z|}\right\}$ if $z:0\leq |z|\leq 1/4$; and
\item[\textbullet] $F(z)$ is so defined for $|z|\geq 1/4$ that $F$ satisfies condition $(*)$ and 
$\OM_F$ is pseudoconvex with non-Levi-flat boundary.
\end{itemize} 
Domains like these are what we informally termed  above as ``very flat at $(0,0)$''. The methods
used by Kim and Lee in \cite{kimLee:abBkaicitpd02} do not seem to work for domains like these
precisely because $\lamf$ vanishes to infinite order.
\end{remark}
\smallskip

A few technical preliminaries are needed before a proof of Theorem~\ref{T:main} can 
be given. It would be helpful to get a sense of the key ideas of our proof. A discussion of our
methodology, plus two lemmas, are presented in Section~\ref{S:BergPrelim}. The proof
itself is given in Section~\ref{S:proofMain}. In some sense, our key technical 
preliminary --- without which sharp lower bounds would be tricky to derive --- is
the proof of Proposition~\ref{P:locali}. This proof will form our next section.
\medskip

\section{The proof of Proposition~\ref{P:locali}}

Let $\eta>0$ be as given in the condition $(*)$. We define
\[
\poet \ := \ \begin{cases}
		\text{the least positive integer $\kappa$ such that $\kappa>1/\eta$}, 
		&\text{if $0<\eta\leq 1$,} \\
		1, &\text{if $\eta > 1$}.
		\end{cases}
\]
Define the objects
\begin{align}
\Psi=(\psi_1,\psi_2) \ &: \ (z,w)\ \longmapsto \ \left(\frac{(2i)^\poet z}{(i+w)^\poet},
							\frac{i-w}{i+w}\right), \notag \\
\Pi \ &:= \ \cplx\times\{w\in\cplx:\mi w > -1\}. \notag
\end{align}
Note that $\Psi\in\hol(\Pi;\CC)$ and that $\Psi$ is injective on $\Pi$. 
Define $f$ by the relation $f(|z|)=F(z)$. Then, under our hypotheses, $f$ is
strictly increasing, whence $F(z)\geq 0 \ \forall z\in\cplx$.
The reader is directed to Lemma~\ref{L:inc} for a proof of this
fact. Thus $\OM_F\varsubsetneq\Pi$, whence $\Psi$ is injective on $\OM_F$.
\smallskip

We now claim that $\Psi(\OM_F)$ is bounded. To see this, note that any
$(z,w)\in\OM_F$ can be written as $(z,\er{w}+i(F(z)+h)$, where $h>0$. Thus
\begin{equation}\label{E:psi2bd}
|\psi_2(z,w)|^2 \ = \ \frac{(F(z)+h-1)^2+(\er{w})^2}{(F(z)+h+1)^2+(\er{w})^2} \ \leq
\ 1 \;\; \forall(z,w)\in\OM_F.
\end{equation}
We used the fact that $F(z)\geq 0 \ \forall z\in\cplx$ to deduce this estimate.
Now note that
\[
|\psi_1(z,w)|^2 \ = \ \frac{4^\poet|z|^2}{((F(z)+h+1)^2+(\er{w})^2)^\poet}.
\]
Let $R>0$ and $C>0$ be exactly as given in the condition $(*)$. Then
\[
|\psi_1(z,w)|^2 \ \leq \ \frac{4^\poet R^2}{(F(z)+h+1)^{2\poet}} \ \leq \ 4^\poet R^2 \;\;
\forall(z,w)\in\OM_F \ \text{and $|z|\leq R$.}
\]
On the other hand
\[
|\psi_1(z,w)|^2 \ \leq \ \frac{4^\poet|z|^2}{(F(z))^{2\poet}} \ \leq \ 
\frac{4^\poet|z|^2}{C^{2\poet}|z|^{2\eta\poet}} \;\; \forall(z,w)\in\OM_F \ \text{and $|z|\geq R$.}
\]
From the last two inequalities, we conclude that
\begin{equation}\label{E:psi1bd}
|\psi_1(z,w)|^2 \ \leq \ \max\left\{4^\poet R^2, \ 
\left(\frac{4}{C^2}\right)^\poet R^{-2(\eta\poet-1)}\right\} \;\; \forall(z,w)\in\OM_F.
\end{equation}
From \eqref{E:psi2bd} and \eqref{E:psi1bd}, our claim, and hence Part~(1), follows.
\smallskip

To demonstrate Part~(2), we will need a localization principle established by Ohsawa:
\begin{itemize}
\item[{}] {\bf {\em Localization Lemma} (Ohsawa, \cite{ohsawa:bbBkfpd84})} 
{\em Let $D$ be a bounded pseudoconvex domain
in $\Cn$, $p$ be a boundary point, and $V\Subset U$ be two open neighbourhoods of $p$.
Then, there is a constant $\delta\equiv\delta(U,V)>0$ such that
\[
\delta K_{D\cap U}(Z) \ \leq \ K_D(Z) \;\; \forall Z\in D\cap V.
\]}
\end{itemize}

\noindent{Substituting
\begin{align}
D \ &= \ \Psi(\OM_F), && \qquad p \;\; = \ (0,1), \notag \\
D\cap U \ &= \ \Psi(\OM_F\cap\polyD), &&D\cap V \ = \ 
\Psi\left(\OM_F\cap\left(\tfrac{1}{2}\polyD\right)\right) \notag
\end{align}
into the localization lemma, we conclude that there exists a $\delta\equiv\delta(\polyD)>0$
such that (here $G_F$ stands for $\Psi(\OM_F)$)
\begin{equation}\label{E:locali1}
\delta K_{G_F\cap U}(\Psi(z,w)) \ \leq \ K_{G_F}(\Psi(z,w)) \;\; \forall
(z,w)\in\OM_F\cap(\tfrac{1}{2}\polyD).
\end{equation}
Recall, however, the transformation rule for the Bergman kernel:
\[
K_{\OM^j}(z,w) \ = \ |{\rm Jac}_\cplx(\Psi)(z,w)|^2 K_{\Psi(\OM^j)}(\Psi(z,w)) \;\;
\forall (z,w)\in\OM^j, \ j=1,2,
\]
where, in the present case, $\OM^1=\OM_F$ and $\OM^2=\OM_F\cap\polyD$. Applying this
to \eqref{E:locali1} gives us the inequality \eqref{E:locali}.\qed
\medskip

\section{Preliminary remarks and lemmas}\label{S:BergPrelim}

The idea behind the upper bound in \eqref{E:approach} is quite standard. Given a point $(z_0,w_0)\in\OM_F$,
the quantity $K_F(z_0,w_0)$ is dominated by the reciprocal of the volume of the largest polydisc centered
at $(z_0,w_0)$ that is contained in $\OM_F$. The volume of this polydisc will be influenced by the curvature
of $\bdy\OM_F$ at the point on $\bdy\OM_F$ that is closest to $(z_0,w_0)$. However, if $(z_0,w_0)$ is confined
to any of the approach regions $\apprh$, then this volume is controlled by the boundary geometry
at $(0,0)\in\bdy\OM_F$. That one has this control for any approach region $\apprh$ --- regardless
of $\alpha$ and $N$ --- is a consequence of the fact that $(0,0)$ is of infinite type.
\smallskip

The derivation of the lower bound in \eqref{E:approach} relies on the construction of a 
suitable square-integrable holomorphic function. In this construction, we are aided by the
localization principle stated in Proposition~\ref{P:locali}. 
The three main ingredients in the derivation of the lower bound are:
\begin{itemize}
\item[{\em i)}] We choose a suitable polydisc $\polyD$ centered at $(0,0)$ and estimate
$K_{\OM_F\cap\polyD}(z,w)$ for $(z,w)\in\apprh\cap\left(\tfrac{1}{2}\polyD\right)$. We rely
on the fact that $K_{\OM_F\cap\polyD}(z,w)$ is given by
\[
K_{\OM_F\cap\polyD}(z,w) \ = \ \sup\left\{\frac{|\phi(z,w)|^2}{\|\phi\|^2_{\elII({\loc})}}:
					\phi\in A^2(\OM_F\cap\polyD)\right\}.
\]
\item[{\em ii)}] To obtain a lower bound, we select a suitable function $\phi_t\in A^2(\OM_F\cap\polyD)$
and estimate $\|\phi_t\|^2_{A^2}, \ t>0$. This reduces
finding a lower bound for $K_{\OM_F\cap\polyD}(z,s+it)$, 
$(z,s+it)\in\apprh\cap\left(\tfrac{1}{2}\polyD\right)$, to estimating 
an integral over a region in $\RR^4$ whose boundaries are determined by the function $f$.
\item[{\em iii)}] The difficult issue is to find the {\em desired bound in terms of $t$} for 
the latter integral. The condition~\eqref{E:flatness} is used to break up the aforementioned 
region of integration into sub-domains on which the integral admits the desired estimate.
\end{itemize}
\smallskip

We now present two lemmas that will be necessary to complete the proof of 
Theorem~\ref{T:main}. Lemma~\ref{L:inc} constitutes the proof of Part~(1) of
Theorem~\ref{T:main}. 
\smallskip

\begin{lemma}\label{L:inc}
Let $F$ be a smooth subharmonic function on $\cplx$ such that $F(0)=0$ and $F$ is
radial. Define $f$ by the relation
$f(|z|)=F(z)$, and write $\OM_F:=\{(z,w)\in\CC:\mi w > F(z)\}$.
Assume that $\OM_F$ is not Levi-flat in a neighbourhood of $(0,0)$. Then $f$ is a strictly
increasing function on $[0,\infty)$.
\end{lemma}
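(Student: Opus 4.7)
The plan is to argue by contradiction, using the fact that radial subharmonic functions have nondecreasing circular averages and the strong maximum principle for subharmonic functions.

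First I would record the standard fact that, since $F$ is subharmonic, its circular mean
\[
A(r) \ = \ \frac{1}{2\pi}\int_0^{2\pi} F(re^{i\theta})\, d\theta
\]
is a nondecreasing function of $r\ge 0$. Because $F$ is radial, $A(r)=f(r)$, so $f$ is automatically nondecreasing on $[0,\infty)$. The task therefore reduces to ruling out the existence of an interval on which $f$ is constant.

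Suppose, toward a contradiction, that $f$ is not strictly increasing. Then one can find $0\le a_1<a_2$ with $f(a_1)=f(a_2)=:c$, and $f\equiv c$ on $[a_1,a_2]$. I would then work on the open disc $U:=\{z\in\cplx:|z|<a_2\}$. By the maximum principle for subharmonic functions applied on $\overline{U}$, $F\le\sup_{\bdy U}F=f(a_2)=c$ throughout $\overline{U}$. But on the circle $\{|z|=a_1\}\subset U$, $F$ attains the value $c$, so $F$ achieves an interior maximum. By the strong maximum principle for subharmonic functions on the connected domain $U$, this forces $F\equiv c$ on $U$. Combined with the assumption $F(0)=0$, one gets $c=0$, and hence $F\equiv 0$ throughout $U$.

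The final step is to observe that $F\equiv 0$ on a neighbourhood of $0$ contradicts the non-Levi-flatness hypothesis. Indeed, over the set $U$ the domain $\OM_F$ becomes
\[
\OM_F\cap(U\times\cplx) \ = \ U\times\{w\in\cplx:\mi w>0\},
\]
whose boundary $U\times\RR$ is foliated by the complex discs $U\times\{s\}$, $s\in\RR$, and is therefore Levi-flat in a neighbourhood of the origin --- contradicting the hypothesis on $\OM_F$. This contradiction yields the strict monotonicity of $f$. I do not expect any real obstacle here; the only point requiring care is the invocation of the strong maximum principle for subharmonic (rather than harmonic) functions, which is standard.
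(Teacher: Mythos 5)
Your proposal is correct and takes essentially the same approach as the paper: negate strict monotonicity, invoke the maximum principle for the radial subharmonic $F$ to conclude $F\equiv 0$ on a disc about the origin, and derive a contradiction from the non-Levi-flatness of $\bdy\OM_F$ near $(0,0)$. The only cosmetic difference is your preliminary reduction via circular averages to obtain a constancy interval; the paper dispenses with this by applying the maximum principle directly to the inequality $f(r_1)\geq f(r_2)$.
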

\begin{proof}
Suppose there exist $r_1<r_2$, with $r_1,r_2\in[0,\infty)$, such that $f(r_1)\geq f(r_2)$.
Then, by our hypothesis on $F$
\[
\sup_{z\in\bdy D(0;r_2)}F(z) \ = \ f(r_2) \ \leq \ f(r_1).
\]
By the Maximum Principle, therefore, $F|_{D(0,r_2)}\equiv 0$. But then, this would imply
that the portion $\bdy\OM_F\cap\ball{2}(0;r_2)$ of $\bdy\OM_F$ is Levi-flat; i.e. a
contradiction. Hence $f$ is strictly increasing.
\end{proof}
\smallskip

\begin{lemma}\label{L:bigKey}
Let $F$ and $\OM_F$ have all the properties listed in Lemma~\ref{L:inc}. Let $\lamf$ satisfy
the condition~\eqref{E:flatness}, and let (since, in view of Lemma~\ref{L:inc}, $f$
is increasing) $\geef:=\lamf^{-1}$. Then, there exist constants $T>0$ and $K>0$ such that 
\begin{equation}\label{E:compare}
0 \ < \ \geef(2t)^2-\geef(t)^2 \ \leq \ K\geef(t)^2 \;\; \forall t\in(0,T).
\end{equation}
\end{lemma}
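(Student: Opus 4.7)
My strategy will be to prove the stronger statement $\geef(2t)\leq c\,\geef(t)$ for small $t>0$, which immediately implies the claimed upper bound with $K=c^{2}-1$. The strict inequality $\geef(2t)^{2}-\geef(t)^{2}>0$ is cost-free: Lemma~\ref{L:inc} makes $f$ strictly increasing, so $\lamf(x)=-1/\log f(x)$ is strictly increasing on its domain, and hence so is its inverse $\geef$. Thus the real content of the lemma is the doubling bound, which I intend to pull back from $\chi$ to $\geef$ through the two-sided comparison $(1/B)\chi\leq\lamf\leq B\chi$.

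The first main step is to derive a doubling inequality for $\chi$. Evaluating \eqref{E:flatness} at $x=0$ together with the continuity of $\chi$ forces $\chi(0)=0$. Because $\chi^{p}$ is convex on $(0,\eps_{0})$ and extends continuously to the origin with value $0$, a standard chord argument gives that the secant slope $x\mapsto\chi(x)^{p}/x$ is non-decreasing. Two consequences follow: $\chi$ is strictly increasing on $(0,\eps_{0})$, so that $g:=\chi^{-1}$ is well defined near $0$; and
$$\chi(2x) \;\geq\; 2^{1/p}\chi(x)$$
whenever $2x<\eps_{0}$. Iterating yields $\chi(2^{n}x)\geq 2^{n/p}\chi(x)$ for every admissible $n\in\nat$.

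Next I transfer the doubling to $\geef$. Given small $y>0$, set $x=\geef(y)$, so that $\lamf(x)=y$. Condition \eqref{E:flatness} reads $y/B\leq\chi(x)\leq By$, and applying the increasing function $g$ yields
$$g(y/B) \;\leq\; \geef(y) \;\leq\; g(By).$$
Now choose $n\in\nat$ so large that $2^{n/p}\geq 2B^{2}$---a choice depending only on $B$ and $p$. The iterated doubling for $\chi$, read at $x=g(s)$, says $\chi(2^{n}g(s))\geq 2^{n/p}s\geq 2B^{2}s$, hence $g(2B^{2}s)\leq 2^{n}g(s)$ for small $s$. Substituting $s=t/B$ and chaining,
$$\geef(2t) \;\leq\; g(2Bt) \;=\; g\bigl(2B^{2}\cdot(t/B)\bigr) \;\leq\; 2^{n}g(t/B) \;\leq\; 2^{n}\geef(t).$$
Squaring and subtracting then gives the lemma with $K=4^{n}-1$, provided $T$ is taken so small that $2t$, $2Bt$, and $2^{n}\geef(t)$ all remain in the respective domains where \eqref{E:flatness} and the doubling for $\chi$ apply.

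The main obstacle is the bookkeeping at this transfer step: because the constant $B$ appears on both sides of \eqref{E:flatness}, a plain $2$-doubling of $\chi$ would not suffice---one genuinely needs $(2B^{2})$-doubling, which is why the iteration exponent is forced to be of the order of $p\,\glog(2B^{2})$ rather than something simpler. Once this is handled carefully, the rest is routine manipulation of monotone inverses.
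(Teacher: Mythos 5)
Your argument is correct and is essentially the paper's proof in a slightly different dress: both reduce to a doubling bound for $\geef$, both use the two-sided sandwich $(1/B)\chi\leq\lamf\leq B\chi$ to transfer the problem to $\chi^{-1}$, and both exploit the convexity of $\chi^p$ (equivalently, concavity of $(\chi^p)^{-1}$) via the monotone secant-slope property to control $\chi^{-1}$ under scaling by the factor $2B^2$. The only cosmetic difference is that you iterate a $2$-doubling of $\chi$ instead of proving the single scaling inequality $\chi^{-1}(Bt)\leq B^p\chi^{-1}(t)$ in one step, which costs you at most an extra factor of $2$ in the final constant $K$.
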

\begin{proof}
We just have to show that there exist $T>0$ and $M>0$ such that
\begin{equation}\label{E:prelim}
0 \ < \ \geef(2t)-\geef(t) \ \leq \ M\geef(t) \;\; \forall t\in(0,T).
\end{equation}
If we could show this, then it would follow that
\[
\geef(2t)^2-\geef(t)^2 \ \leq \ M(M+2)\geef(t)^2 \quad \forall t\in(0,T). \notag
\]
To proceed further, we need the following:
\smallskip

\noindent{{\bf Fact.} {\em Let $g$ be a continuous, strictly increasing function on $[0,R]$
satisfying $g(0)=0$, and assume $g^p$ is convex on $(0,R)$ for some $p>0$. Define 
$G:=g^{-1}$, and let $B>1$. Then:}
\begin{equation}\label{E:ratio}
\frac{G(Bt)}{G(t)} \ \leq \ B^p \;\; \forall t\in (0,g(R)/B).
\end{equation}} 

\noindent{To verify this fact, set $\Phi:=(g^p)^{-1}$. Then:
\begin{equation}\label{E:simplify}
\Phi(t) \ = \ G(t^{1/p}) \quad \forall t\in[0,g(R)^p],
\end{equation}
By hypothesis, $\Phi$ is concave on $(0,g(R)^p)$. But since $\Phi$ is also continuous,
\[
\frac{\Phi(B^{p}t^p)}{B^{p}t^p} \ \leq \ \frac{\Phi(t^p)}{t^p} \;\; \forall
t\in (0,g(R)/B).
\]
The above fact now follows simply by rearranging the terms in the above inequality, and
applying \eqref{E:simplify}.}
\smallskip
 
Now let $\eps_0$, $B$, $\chi$, and $p$ be as in \eqref{E:flatness}. Let us also define
\begin{align}
\varkappa_0 &:= (\chi)^{-1} : [0,\chi(\eps_0)]\lrarw \RR \notag \\
\varkappa_1 &:= (B\chi)^{-1} : [0,B\chi(\eps_0)]\lrarw \RR \notag \\
\varkappa_2 &:= \ \left[(1/B)\chi\right]^{-1} : [0,(1/B)\chi(\eps_0)]\lrarw \RR \notag
\end{align}
Since $\lamf$ and $\chi$ are strictly increasing (in view of Lemma~\ref{L:inc}), 
condition~\eqref{E:flatness} implies
that:
\[
\varkappa_1(t) \ \leq \ \geef(t) \ \leq \ \varkappa_2(t) \;\; \forall t\in[0,T_1],
\]
where $T_1:=(1/B)\chi(\eps_0)$. Therefore, we get
\begin{equation}\label{E:sandwich}
\frac{\geef(2t)}{\geef(t)} \ \leq \ 
\frac{\varkappa_2(2t)}{\varkappa_1(t)} \ = \ 
\frac{\varkappa_2(2t)}{\varkappa_2(t)} \ \frac{\varkappa_2(t)}{\varkappa_1(t)} \;\;
\forall t\in(0,T_1/2).
\end{equation}
We now note that
\[
\varkappa_1(t) \ = \ \varkappa_0(B^{-1}t), \quad 
\varkappa_2(t) \ = \ \varkappa_0(Bt) \quad \forall t\in[0,T_1].
\]
Given this last piece of information, we can apply the inequality \eqref{E:ratio} to
the ratios on the right-hand side of \eqref{E:sandwich}. Set $T:=\min(T_1/2,(1/B)T_1)$.
Then,
\[
\frac{\geef(2t)}{\geef(t)} \ \leq \ (2B^2)^p
\;\; \forall t\in(0,T).  
\]
From this, the estimate \eqref{E:prelim} clearly follows if we take
$M=(2B^2)^p-1$. Hence, by our earlier remarks, the result follows.
\end{proof}
\medskip

\section{The proof of Theorem~\ref{T:main}}\label{S:proofMain}

Part~(1) of Theorem~\ref{T:main} has already been established in Lemma~\ref{L:inc}.
Therefore, $f^{-1}$ is a well-defined function. Observe that
\begin{equation}\label{E:inverse}
f^{-1}(t) \ = \ \geef\left(\frac{1}{\log(1/t)}\right), \quad 0<t<1.
\end{equation}
Let $R>0$ be so small that
\[
f^{-1}(3t/4) \ = \ \geef\left(\frac{1}{\log(4/3)+\log(1/t)}\right) \ \geq \ 
\geef\left(\frac{1}{2\log(1/t)}\right) \;\; \forall t\in(0,R).
\]
Let $M$ and $T$ be as given by \eqref{E:prelim} above. Shrinking
$R>0$ if necessary so that $0<1/2\log(1/t)<T \ \forall t\in(0,R)$, we get
\begin{equation}\label{E:compare1}
\frac{f^{-1}(3t/4)}{f^{-1}(t)} \ \geq \ \frac{\geef(1/2\log(t^{-1}))}{\geef(1/\log(t^{-1}))} \ 
\geq \ (M+1)^{-1} \;\;\forall t\in[0,R).
\end{equation}
Write $\mu:=(M+1)^{-1}$. We are given $\alpha>0$ and $N\in\zahl_+$. Since $f(x)$ vanishes to infinite order
at $x=0$, there exists a $H_{N,\alpha}>0$ such that $R\geq H_{N,\alpha}$ and
\begin{equation}\label{E:compare2}
\alpha t^{1/N} \ \leq \ \frac{\mu}{4}f^{-1}(t) \quad\forall t\in[0,H_{N,\alpha}).
\end{equation}
From \eqref{E:compare1} and \eqref{E:compare2}, we see that
\[
|z|+\frac{\mu}{2}f^{-1}(t) \ < \ f^{-1}(3t/4)\;\;\forall z:0\leq|z|<\alpha t^{1/N}, \ 
0<t<H_{N,\alpha},
\]
whence the polydisc
\begin{multline}
\polyD(z,t) \ := \ \Dee\left(z;\frac{\mu}{2}f^{-1}(t)\right)\times\Dee(it;t/4) \ \subset \ \OM_F \\
\forall z:0\leq|z|<\alpha t^{1/N}, \ 0<t<H_{N,\alpha}.
\end{multline}
Now note that that translations $T_s:(z,w)\longmapsto (z,s+w), \ s\in\RR$, are all
automorphisms of $\OM_F$. Thus, by the transformation rule for the Bergman
kernel, and by monotonicity, we get

\begin{align}
K_F(z,s+it) \ &= \ K_F(z,it) \notag \\
		&\leq \ K_{\polyD(z,t)}(z,it) \notag \\ 
		&= \ \frac{1}{{\rm vol}\left(\polyD(z,t)\right)} \;\; 
		\forall (z,s+it)\in\apprh, \ 0<t<H_{N,\alpha}. \notag
\end{align}
The last equality follows from the fact that $\polyD(z,t)$ is a Reinhardt domain
centered at $(z,t)$. Hence, we have one half of the estimate~\eqref{E:approach}:
\begin{align}
K_F(z,w)
\leq \ C_1(\mi w)^{-2}\left[f^{-1}(\mi w)\right]^{-2}\;\;
\forall &(z,w)\in\apprh, \label{E:1stHalf} \\
&0<\mi w<H_{N,\alpha}, \notag
\end{align}
where $C_1=64/\mu^2\pi^2$. 
\smallskip

We will now derive a lower bound. We set $A:=\min(f^{-1}(1),1)$. For the remainder of
this proof, $\polyD$ will denote the polydisc $\Dee(0;A)\times\Dee(0;1)$. In view of
the inequality \eqref{E:locali} of Proposition~\ref{P:locali}, it suffices to find
a lower bound for $K_{\OM_F\cap\polyD}(z,w)$ for $(z,w)\in\left(\tfrac{1}{2}\polyD\right)$.
It is well known that
\begin{equation}\label{E:maxim}
K_{\OM_F\cap\polyD}(z,w) \ = \ \sup_{\phi\in A^2(\OM_F\cap\polyD)}
				\frac{|\phi(z,w)|^2}{\|\phi\|^2_{\elII({\loc})}}.
\end{equation}
Once again, we use the fact that the translations $T_u:(z,w)\longmapsto (z,u+w), \ u\in\RR$, 
are all automorphisms of $\OM_F$, whence
\begin{equation}\label{E:translate}
K_F(z,s+it) \ = \ K_F(z,(u+s)+it) \quad\forall(z,s+it)\in\OM_F \ \text{and $\forall u\in\RR$}.
\end{equation}
Set $\phi_t(z,w):=-4t^2/(w+it)^2, \ t>0.$ Then, from the localization principle \eqref{E:locali}, and 
from \eqref{E:maxim} and \eqref{E:translate}, we get
\begin{align}\label{E:lbdK1}
K_F(z,s+it) \ &\geq \ \delta K_{\OM_F\cap\polyD}(z,it) \\
		&\geq \ \frac{\delta}{\|\phi_t\|^2_{\elII({\loc})}}\;\; 
		\forall (z,t)\in\left(\tfrac{1}{2}\polyD\right).\notag
\end{align}
Let us write $w=u+iv$. We leave the reader to verify that we can apply Fubini's theorem
wherever necessary in the following computation:
\begin{align}
\|\phi_t\|^2_{\elII({\loc})} \ &= \
\lint{|z|<A}{{}} \ \lint{-1}{1}\lint{F(z)}{\sqrt{1-u^2}}\frac{16t^4}{|u+i(v+t)|^4}dv \ du \ dA(z)
\notag \\
&\leq \
16t^4\lint{|z|<A}{{}} \ \lint{F(z)}{\infty} \ \lint{-1}{1}(v+t)^{-4}
\left(1+\left(\frac{u}{v+t}\right)^2\right)^{-2}du \ dv \ dA(z) \notag \\
&\leq \
8t^4\left(\lint{\mathbb{R}}{{}}\frac{dX}{(1+X^2)^2}\right)
\lint{|z|<A}{{}}(t+F(z))^{-2}dA(z)\notag \\
&= \ Ct^4 \lint{0}{A}\frac{r}{(t+f(r))^2}dr, \notag
\end{align}
where $C>0$ is a universal constant. In what follows, we shall denote $f^{-1}(s)$ by
$R_{s}$. By equation \eqref{E:inverse} we have
\begin{equation}\label{E:inverse2}
R_{\sqrt{t}} \ = \ \geef\left(\frac{2}{\log(1/t)}\right), \quad 0<t<1.
\end{equation}
We break up the interval of integration
of the last integral into three sub-intervals to compute:
\begin{align}
\|\phi_t\|^2_{\elII({\loc})} \ &= \
Ct^4 \left(\int_0^{R_t} + \ \int_{R_t}^{R_{\sqrt{t}}} + \ \int_{R_{\sqrt{t}}}^A\frac{r}{(t+f(r))^2}dr\right)
\notag \\
&\leq \ Ct^4\int_0^{R_t}\frac{r}{t^2}dr + 
Ct^4 \left(\int_{R_t}^{R_{\sqrt{t}}} + \ \int_{R_{\sqrt{t}}}^A
\frac{r}{4tf(r)}dr\right) \notag \\
&\leq \ \frac{C}{2}t^2(R_t)^2 + \frac{C}{4}t^2\int_{R_t}^{R_{\sqrt{t}}}r \ dr + 
\frac{C}{4}t^{5/2}A(A-R_{\sqrt{t}}) \notag \\
&\leq \ \frac{C}{2}t^2(R_t)^2 + \frac{C}{4}t^{5/2}A(A-R_{\sqrt{t}}) \label{E:integ1} \\
&\qquad\quad+\frac{C}{8}t^{2}\left(\geef^2\left(\frac{2}{\log(1/t)}\right)-
\geef^2\left(\frac{1}{\log(1/t)}\right)\right),\;\; 0<t<1. \notag
\end{align} 
We used the relation \eqref{E:inverse2} in the estimate for the middle integral above.   
\smallskip 

We now apply Lemma~\ref{L:bigKey} to the third term in \eqref{E:integ1}. Let 
$T>0$ and $K>0$ be defined as given by Lemma~\ref{L:bigKey}. Let $H_0$ be so small that 
$1/\log(t^{-1})<T \ \forall t\in(0,H_0)$, {\em and} so that the second inequality below
holds true:
\begin{align}\label{E:integ2}
\|\phi_t\|^2_{\elII({\loc})} \ &\leq \
\frac{C}{2}\left(1+\frac{K}{4}\right)t^2(R_t)^2 + \frac{C}{4}t^{5/2} \\
&\leq \ C(1+K/4)t^2(R_t)^2 \;\; \forall t\in(0,H_0).\notag
\end{align}
Since $f(x)$ vanishes to infinite order at $x=0$, we can lower $H_0$ --- and this is independent
of parameters like $\alpha>0$ and $N\in\zahl_+$ --- so that the first term of the first inequality
above dominates the second for $t\in(0,H_0)$, giving us \eqref{E:integ2}. Lowering the value of 
$H_0$ further if necessary, we also ensure: 
\[
\OM_F\cap\{(z,w):\mi w<H_0\} \ \subset \ \OM_F\cap\polyD.
\]
From \eqref{E:lbdK1} and \eqref{E:integ2}, we conclude that there exists a constant
$C_0$, which is independent of all parameters, such that
\[
C_0(\mi w)^{-2}\left[f^{-1}(\mi w)\right]^{-2} \leq K_F(z,w) \;\;
\forall (z,w)\in \OM_F\cap\{(z,w):\mi w<H_0\}.
\]
This establishes Part~(3) of our theorem. As a special case, we get the lower bound
on $K_F(z,w)$ in the estimate \eqref{E:approach}. Along with \eqref{E:1stHalf}, this
establishes Part~(2) of our theorem. \qed
\bigskip

\noindent{{\bf Acknowledgements.} First, and foremost, I thank Alexander Nagel for his
interest in an earlier version of this article. My discussion with him led to the
simplification of several arguments in that version; now the arXiv preprint 
\texttt{arXiv:0708.2894v1}. I also thank all my colleagues who noticed this preprint and
offered their comments during my stay at the Institut Mittag-Leffler during the Special
Semester on complex analysis in several variables. The hospitality of the Institut
Mittag-Leffler is greatly appreciated. Finally, I thank the anonymous referee of this
article for his/her valuable suggestions.}

\end{document}